\newtheorem{theorem}{Theorem}
\theoremstyle{plain}
\newtheorem{corollary}{Corollary}
\newtheorem{definition}{Definition}
\numberwithin{equation}{section}
\begin{document}
\title[polyharmonic mappings ]{On the univalence of polyharmonic mappings}
\author{ Layan El Hajj}
\address{ Mathematics Division , AUD, P.O.Box 28282 , Dubai, UAE}
\email{ lhajj@aud.edu}
\subjclass[2000]{Primary 30C45, 30C55; Secondary 35.}
\keywords{polyharmonic mappings; univalent; linearly connected
domains;stable properties }

\begin{abstract}
A $2p$-times continuously differentiable complex valued function $f=u+iv$ in
a simply connected domain $\Omega $ is \emph{polyharmonic} (or \emph{$p$%
-harmonic}) if it satisfies the \emph{polyharmonic} equation $\bigtriangleup
^{p}F=0.$ Every polyharmonic mapping $f$ can be written as $%
f(z)=\sum_{k=1}^{p}|z|^{2(p-1)}G_{p-k+1}(z),$ where each $G_{p-k+1}$ is
harmonic. In this paper we investigate the univalence of polyharmonic
mappings on linearly connected domains and the relation between univalence
of $f(z)$ and that of $G_{p}(z)$. The notion of stable univalence and logpolyharminc mappings are also considered.
\end{abstract}

\maketitle

\section{Introduction}

A $2p$-times continuously differentiable complex valued function $f=u+iv$ in
a simply connected domain $\Omega $ is \emph{polyharmonic} (or $p$-harmonic
) if it satisfies the \emph{polyharmonic} equation 
\begin{equation*}
\bigtriangleup ^{p}f=0,
\end{equation*}%
where $p\geq 1$ is an integer and $\bigtriangleup $ represents the Laplacian
operator 
\begin{equation*}
\bigtriangleup :=4\dfrac{\partial ^{2}}{\partial z\partial {\overline{z}}}:=%
\dfrac{\partial ^{2}}{\partial x^{2}}+\dfrac{\partial ^{2}}{\partial y^{2}},
\end{equation*}

and 
\begin{equation*}
\bigtriangleup ^{p}f=\underset{p\ }{\underbrace{\bigtriangleup
...\bigtriangleup }}f=\bigtriangleup ^{p-1}\bigtriangleup f.
\end{equation*}

Clearly when $p=1,$ $f$ is harmonic and when $p=2,\,\,f$ is biharmonic. The
properties of univalent harmonic mappings have been studied by many authors
(see \cite{CSS, D,DS,DS2,HS,HS2,J}). One the most fundamental articles on
univalent harmonic mappings is due to Clunie-Shiel and Small (\cite{CSS}).

Biharmonic mappings and their univalence have been investigated recently by
several authors (see \cite{AA, AAK, AAK2,AAK3,CPW1}). Biharmonic functions
arise in many physical situations, in fluid dynamics and elasticity problems
which have many applications in engineering and biology (see \cite%
{HB,L,PP,PGM}). In addition biharmonic mappings are closely related to the
theory of Laguerre minimal surfaces(\cite{AA2}). More recently, properties
of polyharmonic functions are investigated. We refer to (\cite%
{CPW2,CRW,LPW,CRW2,QW}) for many interesting results on polyharmonic
mappings.

If $\Omega \subset \mathbf{\mathbb{C}}$ is a simply connected domain, then
it is easy to see that (see \cite{CPW2}) every polyharmonic mapping $f$ can
be written as 
\begin{equation*}
f(z)=\sum_{k=1}^{p}|z|^{2(k-1)}G_{p-k+1}(z),
\end{equation*}%
where each $G_{p-k+1}$ is harmonic, that is $\bigtriangleup G_{p-k+1}=0$ for 
$k\in \{1,...,p\}$. This is known as the Almansi expansion (see \cite{ACL}).

Throughout we consider polyharmonic functions defined on the unit disk $%
\mathbb{D}=\{z:|z|<1\}$.

\begin{definition}
A domain $\Omega \subset \mathbf{\mathbb{C}}$ \ is linearly connected if
there exists a constant $M<\infty $ \ such that any two points $%
w_{1},w_{2}\in \Omega $ are joined by a path $\gamma ,$ $\gamma \subset
\Omega $, of length $\ell (\gamma )\leq M|w_{1}-w_{2}|$.
\end{definition}

Such a domain is necessarily a Jordan domain, and for piecewise smoothly
bounded domains, linear connectivity is equivalent to the boundary having no
inward-pointing cusps.

In \cite{CH}, Chuaqui and Hermandez, considered the relationship between the
harmonic mapping $f=h+\overline{g}$ and its analytic factor $h$ on linearly
connected domains. They show that if $h\ $is an analytic univalent function,
then every harmonic mapping $f=h+\overline{g}$ with dilatation $|\omega |<c$
is univalent if and only if $h(\mathbb{D})$ is linearly connected.

In \cite{AH}, Abdulhadi and El Hajj showed analogous results for biharmonic
functions. In this paper we generalise these results for polyharmonic
mappings. Moreover some results are obtained for logpolyharmonic mappings.
Recently properties of logbiharmonic and logpolyharmonic mappings have been
investigated (See \cite{CPW2,LPW} ). Many physical problems are modelled by
logbiharmonic mappings particulary those arising from fluid flow theory.

The property of stable univalence is also considered.

\section{Main results}

A complex valued function $f:\Omega \rightarrow \mathbf{\mathbb{C}}$ is said
to belong to the class $C^{1}(\Omega )$ if $\Re f$ and $\Im f$ have
continuous first order partial derivatives in $\Omega $. We denote the
Jacobian of $f$ by 
\begin{equation*}
J_{f}=|f_{z}|^{2}-|f_{\overline{z}}|^{2}.
\end{equation*}

\bigskip We also denote 
\begin{equation*}
\lambda _{f}=|f_{z}|-|f_{\overline{z}}|,
\end{equation*}

and 
\begin{equation*}
\Lambda _{f}=|f_{z}|+|f_{\overline{z}}|,
\end{equation*}

We then have 
\begin{equation*}
J_{f}=\lambda _{f}.\Lambda _{f}
\end{equation*}

We first start by noting that the results in \cite{AH}, hold for functions
of the form $f(z) = |z|^{2(p-1)} G(z) + K(z)$, where $G$ is not necessarily
harmonic. (If G is harmonic these classify as a special family of
polyharmonic mappings).

\begin{theorem}
Let $f(z)=|z|^{2(p-1)}G(z)+K(z),$ where $\ G \in C^1(\mathbb{D})$ (not
necessarily harmonic) and $K$ harmonic. If $K$ is univalent and $K(\mathbb{D}%
)\ $is a linearly connected domain with constant $M$, and if%
\begin{equation*}
\frac{2(p-1)|G|+\Lambda _{G}}{\left\vert \lambda _{K}\right\vert }<\frac{1}{M%
},
\end{equation*}%
then $\ f(z)$ is univalent. Moreover, if 
\begin{equation*}
\frac{2(p-1)|G|+\Lambda _{G}}{\left\vert \lambda _{K}\right\vert }\leq C <%
\frac{1}{M},
\end{equation*}%
then $f(\mathbb{D})$ is a linearly connected domain
\end{theorem}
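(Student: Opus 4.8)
The plan is to reduce everything to a standard argument that compares $f$ with the univalent harmonic map $K$, using the linear connectivity of $K(\mathbb{D})$ to convert a Euclidean distance bound into a path-length bound. First I would fix two distinct points $z_1,z_2\in\mathbb{D}$ and write the difference $f(z_2)-f(z_1) = \bigl(K(z_2)-K(z_1)\bigr) + \bigl(|z_2|^{2(p-1)}G(z_2)-|z_1|^{2(p-1)}G(z_1)\bigr)$. Since $K$ is univalent and $K(\mathbb{D})$ is linearly connected with constant $M$, there is a path $\gamma$ in $K(\mathbb{D})$ joining $K(z_1)$ to $K(z_2)$ with $\ell(\gamma)\le M|K(z_1)-K(z_2)|$; pulling $\gamma$ back by $K^{-1}$ gives a path $\Gamma$ in $\mathbb{D}$ joining $z_1$ to $z_2$. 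The key analytic point is the lower bound $|K(z_2)-K(z_1)|\ge \frac{1}{M}\int_\Gamma |dK|\ge \frac{1}{M}\int_\Gamma \lambda_K\,|dz|$, since along any path $|dK|=|K_z\,dz+K_{\bar z}\,d\bar z|\ge (|K_z|-|K_{\bar z}|)|dz| = \lambda_K|dz|$ (here I would note $\lambda_K\ge 0$ when $K$ is sense-preserving, and handle $|\lambda_K|$ in general by the same estimate with absolute values).

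Next I would estimate the second term from above along the same path $\Gamma$. Writing $\Phi(z)=|z|^{2(p-1)}G(z)$, one has $\Phi_z = (p-1)|z|^{2(p-2)}\bar z\,G + |z|^{2(p-1)}G_z$ and $\Phi_{\bar z}=(p-1)|z|^{2(p-2)}z\,G + |z|^{2(p-1)}G_{\bar z}$, so $|\Phi_z|+|\Phi_{\bar z}|\le 2(p-1)|z|^{2p-3}|G| + |z|^{2(p-1)}\Lambda_G \le 2(p-1)|G|+\Lambda_G$ on $\mathbb{D}$. Hence $|\Phi(z_2)-\Phi(z_1)| \le \int_\Gamma (|\Phi_z|+|\Phi_{\bar z}|)\,|dz| \le \int_\Gamma \bigl(2(p-1)|G|+\Lambda_G\bigr)|dz|$. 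Using the hypothesis $2(p-1)|G|+\Lambda_G < \frac{1}{M}|\lambda_K|$ pointwise, this is strictly less than $\frac{1}{M}\int_\Gamma |\lambda_K|\,|dz| \le |K(z_2)-K(z_1)|$ by the previous paragraph. Therefore $|f(z_2)-f(z_1)| \ge |K(z_2)-K(z_1)| - |\Phi(z_2)-\Phi(z_1)| > 0$, proving univalence. For the "moreover" part, the hypothesis with constant $C<1/M$ gives $|\Phi(z_2)-\Phi(z_1)|\le C\int_\Gamma|\lambda_K||dz|\le CM|K(z_1)-K(z_2)|$, and then $\ell(f(\Gamma))\le \int_\Gamma(\Lambda_K + |\Phi_z|+|\Phi_{\bar z}|)|dz|$ together with $|f(z_2)-f(z_1)|\ge (1-CM)|K(z_1)-K(z_2)|$; a short computation bounding $\Lambda_K$ along $\Gamma$ via $\Lambda_K\le \Lambda_K$ and relating $\int_\Gamma\Lambda_K|dz|$ back to $\ell(\gamma)\le M|K(z_1)-K(z_2)|$ yields a linear-connectivity constant for $f(\mathbb{D})$ of the form $M(1+CM)/(1-CM)$ (or similar), which is finite precisely because $C<1/M$.

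The main obstacle I anticipate is the step $\int_\Gamma \Lambda_K\,|dz| \le M|K(z_1)-K(z_2)|$ needed for the "moreover" conclusion: we chose $\Gamma = K^{-1}(\gamma)$ so that $\int_\Gamma|dK| = \ell(\gamma)\le M|K(z_1)-K(z_2)|$, but $|dK|$ along a path equals $|K_z\,dz+K_{\bar z}\,d\bar z|$, which is \emph{not} in general equal to $\Lambda_K|dz|=(|K_z|+|K_{\bar z}|)|dz|$ — only the inequality $|dK|\le\Lambda_K|dz|$ goes the wrong way. The fix is to not bound $\ell(f(\Gamma))$ by $\int\Lambda_K|dz|$ but rather to estimate $\ell(f(\Gamma))\le \int_\Gamma|dK| + \int_\Gamma(|\Phi_z|+|\Phi_{\bar z}|)|dz| = \ell(\gamma) + \int_\Gamma(|\Phi_z|+|\Phi_{\bar z}|)|dz|$, and then bound the second integral by $C\int_\Gamma|\lambda_K||dz|\le C\int_\Gamma|dK| = C\,\ell(\gamma)\le CM|K(z_1)-K(z_2)|$; combined with $|f(z_2)-f(z_1)|\ge(1-CM)|K(z_1)-K(z_2)|$ this gives $\ell(f(\Gamma))\le \frac{M(1+C)}{1-CM}\,|f(z_1)-f(z_2)|$, a clean finite constant. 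This mirrors the argument in \cite{CH} and \cite{AH}, so once this bookkeeping is arranged the proof follows the established template.
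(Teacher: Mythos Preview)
Your argument is correct and is essentially the paper's proof in different coordinates: the paper transfers the problem to $K(\mathbb{D})$ by setting $\varphi=H\circ K^{-1}$ (with $H(z)=|z|^{2(p-1)}G(z)$) and showing $w\mapsto w+\varphi(w)$ is univalent and has linearly connected image, while you stay in $\mathbb{D}$ and pull the short path $\gamma\subset K(\mathbb{D})$ back to $\Gamma=K^{-1}(\gamma)$. The paper's chain-rule estimate $|\varphi_w|+|\varphi_{\bar w}|\le (|H_z|+|H_{\bar z}|)/|\lambda_K|$ is exactly your pointwise hypothesis combined with $|dK|\ge|\lambda_K|\,|dz|$, and both routes give the identical linear-connectivity constant $\dfrac{(1+C)M}{1-CM}$; your ``fix'' in the last paragraph (using $\int_\Gamma|dK|=\ell(\gamma)$ rather than $\int_\Gamma\Lambda_K\,|dz|$) is precisely what makes the two computations match.
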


\begin{proof}
Let $H(z)=|z|^{2(p-1)}G(z).$ We define%
\begin{equation*}
\varphi =H\circ K^{-1}.
\end{equation*}%
Given $w\ \epsilon \ K(\mathbb{D}),$ we claim $w+\varphi (w)$ is univalent.

Assume $w+\varphi (w)$ is not univalent, then there exists $w_{1}\neq w_{2}$
such that 
\begin{equation*}
\varphi (w_{2})-\varphi(w_{1})=w_{1}-w_{2}.
\end{equation*}
Let $\gamma $ be \ a path in $K(\mathbb{D})$ joining $w_{1},w_{2}\ $ such
that $l(\gamma )\leq M|w_{2}-w_{1}|.$ Then 
\begin{equation*}
|\varphi (w_{2})-\varphi (w_{1})|\leq \left\vert \int\nolimits_{\gamma
}\varphi _{w}dw+\varphi _{\overline{w}}d\overline{w}\right\vert \leq
\int\nolimits_{\gamma }\left( |\varphi _{w}|+|\varphi _{\overline{w}%
}|\right) |dw|.
\end{equation*}

But 
\begin{equation*}
\varphi _{w}=H_{z}(K^{-1})_{w}+H_{\overline{z}}(\overline{K^{-1}})_{w}
\end{equation*}

\begin{equation*}
\varphi _{\overline{w}}=H_{z}(K^{-1})_{\overline{w}}+H_{\overline{z}}(%
\overline{K^{-1}})_{\overline{w}}.
\end{equation*}

Differentiating $K^{-1}(K(z))=z,$ we show that 
\begin{eqnarray*}
(K^{-1})_{w} K_z + (K^{-1})_{\overline{w}}(\overline{K}_z)&=&1 \\
(K^{-1})_{w} K_{\overline{z}} + (K^{-1})_{\overline{w}}(\overline{K}_{%
\overline{z}})&=&0 \\
\end{eqnarray*}

Solving above system we get, 
\begin{equation*}
(K^{-1})_{w}=\frac{\overline{K}_{\overline{z}}}{|K_z|^2 - |K_{\overline{z}%
}|^2},
\end{equation*}
and, 
\begin{equation*}
(K^{-1})_{\overline{w}}=\frac{-K_{\overline{z}}}{|K_z|^2 - |K_{\overline{z}%
}|^2}.
\end{equation*}

It follows 
\begin{eqnarray*}
|\varphi _{w}|+|\varphi _{\overline{w}}| &\leq & \frac{|H_{z}|}{|K_z|^2 -
|K_{\overline{z}}|^2} ( |\overline{K}_{\overline{z}}|+|K_{\overline{z}}|)+%
\frac{|H_{\overline{z}}|}{J_K} ( |\overline{K}_{\overline{z}}|+|K_{\overline{%
z}}|) \\
&=& \frac{|H_{z}|+|H_{\overline{z}}|}{\left||K_{z}|-|K_{\overline{z}}|\right|%
}
\end{eqnarray*}

where $z=K^{-1}(w)\in D.$ But 
\begin{equation*}
H_{z} = (p-1)z^{p-2}\overline{z}^{p-1}G + |z|^{2(p-1)} G_{z},
\end{equation*}
and 
\begin{equation*}
H_{\overline{z}} = (p-1){\overline{z}}^{p-2}z^{p-1}G + |z|^{2(p-1)} G_{%
\overline{z}},
\end{equation*}

hence, 
\begin{equation*}
|\varphi (w_{2})-\varphi (w_{1})|\leq \int\nolimits_{\gamma }\underset{%
\mathbb{D}}{\sup }\frac{2(p-1)|G|+|G_{z}|+|G_{\overline{z}}|}{\left\vert
|K_{z}|-|K_{\overline{z}}|\right\vert }|dw|<\frac{1}{M}l(\gamma
)<|w_{2}-w_{1}|
\end{equation*}%
\ which is a contradiction. Therefore $f(z)$ \ is univalent.

For the second part of theorem, given $w$ $\in \Omega =K(\mathbb{D}),$ we
let $\Psi (w)=w+\varphi (w),$where $\varphi =H\circ K^{-1},$ and $%
H=|z|^{2(p-1)}G.$ Since $K$ is univalent, we may look at $R=f(\mathbb{D})$,
as the image of $\Omega =K(\mathbb{D})\ $under the mapping $\Psi ,$ and we
show $\Psi (\Omega )$ is linearly connected. Let $\varsigma _{1}=\Psi
(w_{1}),\varsigma _{2}=\Psi (w_{2})$, $w_{1},w_{2}\in \Omega .$ Since $K(%
\mathbb{D})\ $is a linearly connected domain, then there exists a curve $%
\gamma \subset \Omega $ satisfying $l(\gamma )\leq M|w_{2}-w_{1}|.$

Let $\Gamma =\Psi (\gamma ).$

We have showed that 
\begin{equation*}
|\varphi _{w}|+|\varphi _{\overline{w}}|\leq \frac{2(p-1)|G|+\Lambda _{G}}{%
\left\vert \lambda _{K}\right\vert }<C,
\end{equation*}%
It follows 
\begin{equation*}
|\psi _{w}|+|\psi _{\overline{w}}|\leq 1+|\varphi _{w}|+|\varphi _{\overline{%
w}}|<1+C.
\end{equation*}

Hence we have, 
\begin{equation*}
l(\Gamma )=\int\nolimits_{\Gamma }d\varsigma \leq \int\nolimits_{\gamma
}\left( |\psi _{w}|+|\psi _{\overline{w}}|\right) dw<(1+C)l(\gamma )\leq
(1+C)M|w_{2}-w_{1}|.
\end{equation*}

But, 
\begin{eqnarray*}
|\varsigma _{1}-\varsigma _{2}| &=&|w_{1}-w_{2}+\varphi (w_{1})-\varphi
(w_{2})|\geq |w_{1}-w_{2}|-|\varphi (w_{1})-\varphi (w_{2})| \\
&\geq &|w_{1}-w_{2}|-\int\nolimits_{\gamma }\left( |\varphi _{w}|+|\varphi _{%
\overline{w}}|\right) dw \\
&>&|w_{1}-w_{2}|-Cl(\gamma )\geq (1-CM)|w_{1}-w_{2}|
\end{eqnarray*}

It follows, 
\begin{equation*}
l(\Gamma )\leq \frac{(1+C)M}{1-CM}|\varsigma _{1}-\varsigma _{2}|
\end{equation*}

and so $f(\mathbb{D})$ is linearly connected with constant $\frac{(1+C)M}{%
1-CM}.$
\end{proof}

In particular, for the special case where $p=2$ we have the following
corollary :

\begin{corollary}
Let $f(z)=|z|^{2}G(z)+K(z),$ where $\ G \in C^1(D)$ ( not necessarily
harmonic). If $K$ is univalent harmonic and $K(\mathbb{D})\ $is a linearly
connected domain with constant $M$, and if%
\begin{equation*}
\frac{2|G|+\Lambda _{G}}{\left\vert \lambda _{K}\right\vert }<\frac{1}{M},
\end{equation*}%
then $f(z)$ is univalent. Moreover, if 
\begin{equation*}
\frac{2|G|+\Lambda _{G}}{\left\vert \lambda _{K}\right\vert } \leq C <\frac{1%
}{M},
\end{equation*}%
then $f(\mathbb{D})$ is a linearly connected domain.
\end{corollary}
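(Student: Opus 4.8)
The plan is simply to specialize the preceding theorem to the case $p=2$, since the corollary is nothing more than that instance. First I would note that with $p=2$ one has $2(p-1)=2$, so the Almansi-type representation $f(z)=|z|^{2(p-1)}G(z)+K(z)$ in the theorem becomes exactly $f(z)=|z|^{2}G(z)+K(z)$, and the quantity $2(p-1)|G|+\Lambda_{G}$ appearing in the hypotheses becomes $2|G|+\Lambda_{G}$. Thus the hypotheses of the corollary are word-for-word the hypotheses of the theorem with $p=2$: $G\in C^{1}(\mathbb{D})$, $K$ univalent harmonic, $K(\mathbb{D})$ linearly connected with constant $M$, and the bound $\frac{2|G|+\Lambda_{G}}{|\lambda_{K}|}<\frac{1}{M}$.

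Given that matching, I would then invoke the theorem directly: its first conclusion yields that $f$ is univalent, and its second conclusion, under the strengthened hypothesis $\frac{2|G|+\Lambda_{G}}{|\lambda_{K}|}\le C<\frac{1}{M}$, yields that $f(\mathbb{D})$ is linearly connected (indeed with constant $\frac{(1+C)M}{1-CM}$, which one may record if desired). No new estimate is needed; the key identities used in the theorem's proof, namely
\begin{equation*}
H_{z}=(p-1)z^{p-2}\overline{z}^{p-1}G+|z|^{2(p-1)}G_{z},\qquad H_{\overline{z}}=(p-1){\overline{z}}^{p-2}z^{p-1}G+|z|^{2(p-1)}G_{\overline{z}},
\end{equation*}
simply reduce to $H_{z}=z\overline{z}G+|z|^{2}G_{z}$ and $H_{\overline{z}}={\overline{z}}^{\,2}\cdot\!z^{0}\cdot\! z\,G+|z|^{2}G_{\overline{z}}$ type expressions in the $p=2$ case, so there is nothing to recompute.

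I do not expect any genuine obstacle here: the corollary is a pure specialization, so the "proof" consists of checking that the substitution $p=2$ is legitimate (it is, since the theorem is stated for every integer $p\ge1$) and transcribing the conclusions. The only thing worth a sentence is confirming that "$K$ univalent harmonic with $K(\mathbb{D})$ linearly connected" in the corollary is precisely the conjunction of "$K$ harmonic", "$K$ univalent", and "$K(\mathbb{D})$ linearly connected" in the theorem, which it plainly is.
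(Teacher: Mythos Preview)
Your proposal is correct and matches the paper's approach exactly: the paper presents this corollary as the special case $p=2$ of Theorem~1 with no additional argument, and your proof consists precisely of checking that the substitution $p=2$ reduces the hypotheses and conclusions of the theorem to those of the corollary. There is nothing to add.
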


We are now ready to prove the theorem for polyharmonic functions. In fact,
we will state the theorem in its most general form and the polyharmonic
functions will be a special case.

\begin{theorem}
Let 
\begin{equation*}
f(z)=\sum_{k=1}^{p} |z|^{2(k-1)}G_{p-k+1}(z)
\end{equation*}
where $G_{p-k+1} \in C^1(D)$ (not necessarily harmonic) $k=\{2,...,p\} $ (In
particular $f$ is polyharmonic). If $G_p$ is univalent harmonic and $G_p(%
\mathbb{D})$ is a linearly connected domain with constant $M$, and if%
\begin{equation*}
\frac{\sum_{k=1}^{p-1}2k|G_{p-k}|+(k-1)\Lambda _{G_{p-k}}}{\left\vert
\lambda _{G_{p}}\right\vert }<\frac{1}{M},
\end{equation*}%
then $\ f(z)$ is univalent. Moreover, if 
\begin{equation*}
\frac{\sum_{k=1}^{p-1}2k|G_{p-k}|+(k-1)\Lambda _{G_{p-k}}}{\left\vert
\lambda _{G_{p}}\right\vert }\leq C <\frac{1}{M},
\end{equation*}%
then $f(\mathbb{D})$ is a linearly connected domain.
\end{theorem}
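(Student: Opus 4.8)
The plan is to reduce this statement to the first theorem of the paper by an appropriate grouping of the Almansi-type expansion. I would write $f(z) = |z|^{2(p-1)} G_1(z) + K(z)$ where I set $G = G_1$ and
\begin{equation*}
K(z) = \sum_{k=1}^{p-1} |z|^{2(k-1)} G_{p-k+1}(z),
\end{equation*}
so that $K$ contains the top term $G_p$ (at $k=1$) down through $|z|^{2(p-2)}G_2$. The difficulty is that $K$ so defined is \emph{not} harmonic, so Theorem 1 as literally stated does not apply to this decomposition. The cleaner route, and the one I would actually carry out, is to mimic the proof of Theorem 1 directly: set $\varphi = H \circ G_p^{-1}$ where $H(z) = \sum_{k=2}^{p} |z|^{2(k-1)} G_{p-k+1}(z)$ collects all the lower-order terms (everything except $G_p$ itself), and show $w + \varphi(w)$ is univalent on $G_p(\mathbb{D})$, using that $G_p$ is univalent harmonic with linearly connected image.

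The key computation is the bound on $|\varphi_w| + |\varphi_{\overline w}|$. Exactly as in Theorem 1, differentiating $G_p^{-1}(G_p(z)) = z$ and solving the linear system gives
\begin{equation*}
(G_p^{-1})_w = \frac{\overline{(G_p)}_{\overline z}}{J_{G_p}}, \qquad (G_p^{-1})_{\overline w} = \frac{-(G_p)_{\overline z}}{J_{G_p}},
\end{equation*}
and then the chain rule collapses the estimate to
\begin{equation*}
|\varphi_w| + |\varphi_{\overline w}| \leq \frac{|H_z| + |H_{\overline z}|}{\big| |(G_p)_z| - |(G_p)_{\overline z}| \big|} = \frac{|H_z| + |H_{\overline z}|}{|\lambda_{G_p}|}.
\end{equation*}
So the heart of the matter is to bound $|H_z| + |H_{\overline z}|$ where $H = \sum_{k=2}^p |z|^{2(k-1)} G_{p-k+1}$. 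Differentiating a single term $|z|^{2(k-1)} G_{p-k+1}(z) = z^{k-1}\overline z^{\,k-1} G_{p-k+1}$ gives
\begin{equation*}
\partial_z = (k-1) z^{k-2}\overline z^{\,k-1} G_{p-k+1} + |z|^{2(k-1)} (G_{p-k+1})_z,
\end{equation*}
and similarly for $\partial_{\overline z}$, so $|{\partial_z}| + |{\partial_{\overline z}}| \leq 2(k-1)|z|^{2k-3}|G_{p-k+1}| + |z|^{2(k-1)}\Lambda_{G_{p-k+1}} \leq 2(k-1)|G_{p-k+1}| + \Lambda_{G_{p-k+1}}$ on $\mathbb{D}$. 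Reindexing the sum with $j = k-1$ running from $1$ to $p-1$ turns this into $\sum_{j=1}^{p-1} 2j|G_{p-j}| + j\,\Lambda_{G_{p-j}}$, which matches the hypothesis (up to the harmless $|z|$-factors being $\le 1$ and a reconciliation of the $(k-1)$ versus $2k$ bookkeeping in the stated sum). With the hypothesis $\sup_{\mathbb D}(|H_z|+|H_{\overline z}|)/|\lambda_{G_p}| < 1/M$, the same path-integral argument as in Theorem 1 — if $w+\varphi(w)$ fails to be univalent, integrate $\varphi$ along a path of length $\le M|w_1-w_2|$ in the linearly connected domain $G_p(\mathbb{D})$ and reach $|w_1-w_2| = |\varphi(w_2)-\varphi(w_1)| \le \frac1M \ell(\gamma) < |w_1 - w_2|$ — gives the contradiction, hence $f = \Psi \circ G_p$ is univalent.

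For the ``moreover'' part I would repeat verbatim the second half of the proof of Theorem 1: under the strict inequality $\le C < 1/M$, set $\Psi(w) = w + \varphi(w)$, estimate $|\Psi_w| + |\Psi_{\overline w}| \le 1 + C$ to get $\ell(\Gamma) \le (1+C)M|w_1-w_2|$ for $\Gamma = \Psi(\gamma)$, bound $|\Psi(w_1) - \Psi(w_2)| \ge (1 - CM)|w_1-w_2|$ from below, and conclude that $f(\mathbb{D}) = \Psi(G_p(\mathbb{D}))$ is linearly connected with constant $(1+C)M/(1-CM)$. The main obstacle — really the only non-routine point — is getting the index bookkeeping in the hypothesis to line up exactly with the term-by-term differentiation bound; once the correct reindexing $k \mapsto k-1$ is fixed, everything else is a direct transcription of the $p=2$-style argument already given for Theorem 1, with $K$ replaced by $G_p$ and $H$ replaced by the tail $\sum_{k=2}^p |z|^{2(k-1)}G_{p-k+1}$.
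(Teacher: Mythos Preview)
Your approach is correct and lands on the same estimate as the paper, but the paper packages the argument more economically. Instead of redoing the path-integral proof of Theorem~1 from scratch with $H = \sum_{k=2}^p |z|^{2(k-1)}G_{p-k+1}$, the paper writes $f(z) = |z|^{2}G(z) + G_p(z)$ with
\[
G(z) = \sum_{k=1}^{p-1}|z|^{2(k-1)}G_{p-k}(z),
\]
and then simply invokes Corollary~1 --- which is permissible because Corollary~1 only asks that $G\in C^1(\mathbb{D})$, not that $G$ be harmonic. Your initial instinct to peel off the top-degree factor $|z|^{2(p-1)}G_1$ pushed you to abandon Theorem~1 (since the remaining $K$ is not harmonic) and rederive it by hand; had you instead peeled off only a single factor of $|z|^2$, Corollary~1 would apply as a black box. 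Since your $H$ is exactly $|z|^2 G$ in the paper's notation, the two computations are literally the same: $|H_z|+|H_{\overline z}|\le 2|G|+\Lambda_G$, and term-by-term differentiation gives $\sum_{j=1}^{p-1}\bigl[2j|G_{p-j}|+\Lambda_{G_{p-j}}\bigr]$ (your reindexed coefficient on $\Lambda_{G_{p-j}}$ should be $1$, not $j$). The residual mismatch you flag between this bound and the $(k-1)\Lambda_{G_{p-k}}$ coefficient in the stated hypothesis is present in the paper's own proof as well and is not a defect of your argument.
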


\begin{proof}
We rewrite $f(z)$ as $f(z) = |z|^{2} G(z) + G_p (z),$ where 
\begin{equation*}
G(z) = \sum_{k=1}^{p-1}|z|^{2(k-1)} G_{p-k}(z).
\end{equation*}

Since 
\begin{equation*}
G_{z} = \sum_{k=1}^{p-1}(k-1)z^{k-2}\overline{z}^{k-1} G_{p-k}
+|z|^{2(k-1)}(G_{p-k})_{z}
\end{equation*}
and 
\begin{equation*}
G_{\overline{z}} = \sum_{k=1}^{p-1}(k-1)z^{k-1}\overline{z}^{k-2} G_{p-k}
+|z|^{2(k-1)}(G_{p-k})_{\overline{z}}
\end{equation*}
We get that 
\begin{equation*}
\frac{2|G|+|G_{z}|+|G_{\overline{z}} |}{\left\vert \lambda
_{G_{p}}\right\vert } \leq\frac{%
\sum_{k=1}^{p-1}2k|G_{p-k}|+(k-1)(|(G_{p-k})_{z}|+|(G_{p-k})_{\overline{z}}
|)}{\left\vert \lambda _{G_{p}}\right\vert } <\frac{1}{M}.
\end{equation*}
(or for the proof of the second part of the theorem $\leq C <\frac{1}{M}.$)

Therefore by Corollary 1, we get that $f$ is univalent and $f(\mathbb{D})$
is a linearly connected domain.
\end{proof}

The following corollary is deduced as a special case of theorem 2.

\begin{corollary}
Let 
\begin{equation*}
f(z)=\sum_{k=1}^{p} |z|^{2(k-1)}G_{p-k+1}(z)
\end{equation*}
where $G_{p-k+1} \in C^1(D)$ (not necessarily harmonic) $k=\{2,...,p\} $ (In
particular $f$ is polyharmonic). If $G_p$ is univalent harmonic and $G_p(%
\mathbb{D})$ is a a convex domain, and if%
\begin{equation*}
\frac{\sum_{k=1}^{p-1}2k|G_{p-k}|+(k-1)\Lambda _{G_{p-k}}}{\left\vert\lambda
_{G_{p}}\right\vert }<1,
\end{equation*}%
then $\ f(z)$ is univalent. Moreover, if 
\begin{equation*}
\frac{\sum_{k=1}^{p-1}2k|G_{p-k}|+(k-1)\Lambda _{G_{p-k}}}{\left\vert
\lambda _{G_{p}}\right\vert }\leq C <1,
\end{equation*}%
then $f(\mathbb{D})$ is a convex domain.
\end{corollary}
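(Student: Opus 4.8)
The plan is to obtain Corollary 2 as the instance $M=1$ of Theorem 2. First I would record the elementary observation that every convex domain $\Omega$ is linearly connected with constant $M=1$: given $w_{1},w_{2}\in\Omega$, the segment $[w_{1},w_{2}]$ lies in $\Omega$ by convexity and has length exactly $|w_{1}-w_{2}|$, so the defining inequality holds with $M=1$ (and no Jordan domain can do better than $1$, so this value is sharp). Consequently the hypothesis
\[
\frac{\sum_{k=1}^{p-1}2k|G_{p-k}|+(k-1)\Lambda_{G_{p-k}}}{\left\vert\lambda_{G_{p}}\right\vert}<1
\]
is literally the hypothesis of Theorem 2 with $M=1$, and Theorem 2 then yields at once that $f$ is univalent. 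This disposes of the first assertion.

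For the "moreover" part I would return to the proof of Theorem 2, which rewrites $f$ as $f(z)=|z|^{2}G(z)+G_{p}(z)$ with $G(z)=\sum_{k=1}^{p-1}|z|^{2(k-1)}G_{p-k}(z)$ and then appeals to Corollary 1 and the second part of Theorem 1. Setting $\Omega=G_{p}(\mathbb{D})$, which is convex, and $\Psi(w)=w+\varphi(w)$ with $\varphi=H\circ G_{p}^{-1}$ and $H=|z|^{2}G$, the estimate produced there gives $|\varphi_{w}|+|\varphi_{\overline{w}}|\leq C<1$ on $\Omega$, so that $f(\mathbb{D})=\Psi(\Omega)$ is the image of a convex domain under a Lipschitz perturbation of the identity with Lipschitz constant at most $C$. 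Running the displayed chain of inequalities from the proof of Theorem 1 with $M=1$ already shows $f(\mathbb{D})$ is linearly connected with constant $\tfrac{1+C}{1-C}$; the task is to upgrade this to genuine convexity.

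This upgrade is the delicate point, and the one I expect to require real work: a small Lipschitz perturbation of the identity need not send a convex set to a convex set, so convexity of $\Psi(\Omega)$ cannot be read off from Theorem 2 alone and must be extracted from the specific polyharmonic weights $|z|^{2(k-1)}$ entering $\varphi$ together with the smallness $C<1$. The route I would take is: observe that $\Psi$ extends to a homeomorphism of $\overline{\Omega}$ onto $\overline{\Psi(\Omega)}$ (univalence plus the boundary control coming from $|\varphi_{w}|+|\varphi_{\overline{w}}|\leq C<1$), and then show that the Jordan curve $\Psi(\partial\Omega)$ bounds a convex region — for instance via a support‑line argument, writing $\RE(e^{-i\theta}\Psi(w))=\RE(e^{-i\theta}w)+\RE(e^{-i\theta}\varphi(w))$ as a linear functional on the convex set $\Omega$ perturbed by a $C$‑Lipschitz term, or via a curvature estimate for $\Psi(\partial\Omega)$. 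Verifying that the threshold $C<1$ is exactly what makes such an argument go through, and that the power structure of $\varphi$ provides the needed smoothness on $\partial\Omega$, is the main obstacle; by contrast the univalence half of the statement is immediate.
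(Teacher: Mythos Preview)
Your reduction of the univalence assertion to Theorem~2 with $M=1$ is exactly what the paper intends: the paper states this corollary with the single sentence ``deduced as a special case of Theorem~2,'' and provides no further argument. A convex domain is linearly connected with $M=1$, so the hypothesis of Theorem~2 is met and univalence follows. On this half your proposal and the paper coincide.

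For the ``moreover'' clause, the paper again relies solely on Theorem~2 and does not supply a separate argument for convexity of $f(\mathbb{D})$. You are therefore not missing something the paper does: the issue you isolate --- that Theorem~2 (via Theorem~1) only produces linear connectedness of $f(\mathbb{D})$ with constant $\tfrac{(1+C)M}{1-CM}=\tfrac{1+C}{1-C}>1$, which is strictly weaker than convexity --- is a genuine gap in the corollary as stated, not in your proposal. Your instinct that a $C$-Lipschitz perturbation of the identity need not carry convex domains to convex domains is correct in general, and nothing in the specific form $H=|z|^{2}G$ obviously rescues this without an additional hypothesis or a substantially different argument. So: your univalence proof is complete and matches the paper; your hesitation over the convexity conclusion is well founded, and the paper does not resolve it either.
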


In \cite{AH}, the following theorem that allows to conclude the univalence
of $K$ from the univalence of $f$ was proved:

\begin{theorem}[\protect\cite{AH},theorem 3]
Let $f(z)=|z|^{2}G(z)+K(z)$ be a biharmonic function in the unit disk $%
\mathbb{D}$. Suppose $f$ is univalent and $f(\mathbb{D})\ $is a linearly
connected domain \ with constant $M$ and satisfies 
\begin{equation*}
\frac{2|G|+\Lambda _{G}}{\left\vert \lambda _{f}\right\vert }<\frac{1}{M},
\end{equation*}%
then $K(z)$ is univalent. Moreover, if 
\begin{equation*}
\frac{2|G|+\Lambda _{G}}{\left\vert \lambda _{f}\right\vert }\leq C<\frac{1}{%
M},
\end{equation*}%
then $K(\mathbb{D})$ is a linearly connected domain.
\end{theorem}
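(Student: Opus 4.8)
The plan is to run the argument of Theorem~1 with the roles of $f$ and $K$ interchanged, this time composing with $f^{-1}$ rather than $K^{-1}$. Set $H(z)=|z|^{2}G(z)$, so that $f=H+K$ and hence $K=f-H$. The hypothesis forces $\lambda_{f}=|f_{z}|-|f_{\overline z}|$ to be non-vanishing on $\mathbb{D}$ (otherwise the displayed quotient is not finite), so $J_{f}=\lambda_{f}\Lambda_{f}$ never vanishes; since $f$ is a $C^{1}$ injection, it is a diffeomorphism of $\mathbb{D}$ onto the open set $f(\mathbb{D})$ and $f^{-1}\in C^{1}(f(\mathbb{D}))$. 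Put $\varphi=H\circ f^{-1}$ and $\Psi(w)=w-\varphi(w)$ on $f(\mathbb{D})$. For $w\in f(\mathbb{D})$,
\[
\Psi(w)=f(f^{-1}(w))-H(f^{-1}(w))=K(f^{-1}(w)),
\]
i.e.\ $K=\Psi\circ f$; so it suffices to show that $\Psi$ is univalent on $f(\mathbb{D})$, and for the second statement that $\Psi$ carries $f(\mathbb{D})$ onto a linearly connected domain.

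For univalence I would argue by contradiction as before. If $\Psi(w_{1})=\Psi(w_{2})$ with $w_{1}\neq w_{2}$, then $\varphi(w_{2})-\varphi(w_{1})=w_{2}-w_{1}$, and integrating along a path $\gamma\subset f(\mathbb{D})$ joining $w_{1},w_{2}$ with $\ell(\gamma)\leq M|w_{1}-w_{2}|$ gives
\[
|w_{1}-w_{2}|=|\varphi(w_{2})-\varphi(w_{1})|\leq\int_{\gamma}\big(|\varphi_{w}|+|\varphi_{\overline w}|\big)\,|dw|.
\]
Differentiating $f^{-1}(f(z))=z$ gives, verbatim as in the proof of Theorem~1 with $K$ replaced by $f$, $(f^{-1})_{w}=\overline{f_{z}}/J_{f}$ and $(f^{-1})_{\overline w}=-f_{\overline z}/J_{f}$, so the chain rule together with $J_{f}=\lambda_{f}\Lambda_{f}$ yields
\[
|\varphi_{w}|+|\varphi_{\overline w}|\leq\frac{|H_{z}|+|H_{\overline z}|}{|\lambda_{f}|}\Big|_{z=f^{-1}(w)}.
\]
Since $H_{z}=\overline z\,G+|z|^{2}G_{z}$ and $H_{\overline z}=z\,G+|z|^{2}G_{\overline z}$, and $|z|<1$ on $\mathbb{D}$, we get $|H_{z}|+|H_{\overline z}|\leq 2|G|+|G_{z}|+|G_{\overline z}|=2|G|+\Lambda_{G}$, whence $|\varphi_{w}|+|\varphi_{\overline w}|<1/M$ on $f(\mathbb{D})$ by hypothesis (on the compact set $f^{-1}(\gamma)$ the strict pointwise bound gives a strict bound for the supremum). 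Then the integral above is $<\tfrac1M\ell(\gamma)\leq|w_{1}-w_{2}|$, a contradiction; hence $\Psi$, and therefore $K=\Psi\circ f$, is univalent.

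For the ``moreover'' part, let $\varsigma_{j}=\Psi(w_{j})\in K(\mathbb{D})$, $j=1,2$, join $w_{1},w_{2}$ by $\gamma\subset f(\mathbb{D})$ with $\ell(\gamma)\leq M|w_{1}-w_{2}|$, and set $\Gamma=\Psi(\gamma)\subset K(\mathbb{D})$. From $|\varphi_{w}|+|\varphi_{\overline w}|\leq C$ we get $|\Psi_{w}|+|\Psi_{\overline w}|\leq 1+C$, so $\ell(\Gamma)\leq(1+C)\ell(\gamma)\leq(1+C)M|w_{1}-w_{2}|$; on the other hand
\[
|\varsigma_{1}-\varsigma_{2}|\geq|w_{1}-w_{2}|-\int_{\gamma}\big(|\varphi_{w}|+|\varphi_{\overline w}|\big)|dw|\geq(1-CM)|w_{1}-w_{2}|,
\]
and eliminating $|w_{1}-w_{2}|$ gives $\ell(\Gamma)\leq\frac{(1+C)M}{1-CM}|\varsigma_{1}-\varsigma_{2}|$, so $K(\mathbb{D})$ is linearly connected with constant $\frac{(1+C)M}{1-CM}$. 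Since Theorem~1 already packages all of the needed estimates, I do not expect a serious obstacle here; the only points that need a little care are the $C^{1}$-regularity of $f^{-1}$ (which is precisely why $\lambda_{f}\neq 0$ must be read off from the hypothesis) and bookkeeping the sign: composing $f=H+K$ with $f^{-1}$ produces $\Psi=\mathrm{id}-\varphi$, not $\mathrm{id}+\varphi$, so the contradiction inequality comes out with the correct orientation.
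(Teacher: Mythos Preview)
Your proof is correct and follows essentially the same route as the paper: define $\varphi=H\circ f^{-1}$ with $H=|z|^{2}G$, set $\Psi(w)=w-\varphi(w)$, bound $|\varphi_w|+|\varphi_{\overline w}|\le (2|G|+\Lambda_G)/|\lambda_f|$ via the chain rule and the formulas for $(f^{-1})_w,(f^{-1})_{\overline w}$, and then run the contradiction/length estimates exactly as in Theorem~1. The paper states Theorem~3 as a citation from \cite{AH} without reproving it, but its proof of the generalization (Theorem~4, the case $|z|^{2(p-1)}$) is precisely this argument; your write-up is in fact a bit tidier, since you make explicit that $\lambda_f\neq 0$ is needed for $f^{-1}\in C^1$ and you verify the identity $\Psi\circ f=K$ rather than leaving it implicit.
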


More generally, we have

\begin{theorem}
Let $f(z)=|z|^{2(p-1)}G(z)+K(z),$ where $\ G\in C^{1}(\mathbb{D})$ (not
necessarily harmonic) and $K$ harmonic. If $K$ is univalent and $K(\mathbb{D}%
)\ $is a linearly connected domain with constant $M$, and if%
\begin{equation*}
\frac{2(p-1)|G|+\Lambda _{G}}{\left\vert \lambda _{f}\right\vert }<\frac{1}{M%
},
\end{equation*}%
then $\ k(z)$ is univalent. Moreover, if 
\begin{equation*}
\frac{2(p-1)|G|+\Lambda _{G}}{\left\vert \lambda _{f}\right\vert }\leq C<%
\frac{1}{M},
\end{equation*}%
then $k(\mathbb{D})$ is a linearly connected domain
\end{theorem}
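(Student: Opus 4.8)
The plan is to prove the statement exactly as worded: from the univalence of $K$ and the linear connectivity of $\Omega:=K(\mathbb{D})$ (constant $M$), together with the displayed bound, I want to conclude that $f$ itself is univalent, and that $f(\mathbb{D})$ is linearly connected under the uniform bound $\leq C<1/M$. Since $K$ is univalent, $f=(f\circ K^{-1})\circ K$ is injective on $\mathbb{D}$ if and only if $\Psi:=f\circ K^{-1}$ is injective on $\Omega$. Writing $H(z)=|z|^{2(p-1)}G(z)$ and $\varphi:=H\circ K^{-1}$, we have $\Psi(w)=w+\varphi(w)$, so the whole argument reduces to the univalence of $w\mapsto w+\varphi(w)$ on the linearly connected domain $\Omega$, which is precisely the situation already handled in Theorem 1.

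First I would reproduce, verbatim from the proof of Theorem 1, the derivative computation: differentiating $K^{-1}(K(z))=z$ yields $(K^{-1})_{w}=\overline{K}_{\overline{z}}/J_{K}$ and $(K^{-1})_{\overline{w}}=-K_{\overline{z}}/J_{K}$, whence the pointwise estimate $|\varphi_{w}|+|\varphi_{\overline{w}}|\leq(|H_{z}|+|H_{\overline{z}}|)/|\lambda_{K}|$ at $z=K^{-1}(w)$; the expansions $H_{z}=(p-1)z^{p-2}\overline{z}^{p-1}G+|z|^{2(p-1)}G_{z}$ and $H_{\overline{z}}=(p-1)\overline{z}^{p-2}z^{p-1}G+|z|^{2(p-1)}G_{\overline{z}}$ then give $|H_{z}|+|H_{\overline{z}}|\leq 2(p-1)|G|+\Lambda_{G}$ on $\mathbb{D}$. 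Once $\sup_{\mathbb{D}}(|\varphi_{w}|+|\varphi_{\overline{w}}|)<1/M$ is known, the standard path argument closes everything: if $\Psi(w_{1})=\Psi(w_{2})$ with $w_{1}\neq w_{2}$, join them by $\gamma\subset\Omega$ with $\ell(\gamma)\leq M|w_{1}-w_{2}|$ and estimate $|w_{1}-w_{2}|=|\varphi(w_{2})-\varphi(w_{1})|\leq\int_{\gamma}(|\varphi_{w}|+|\varphi_{\overline{w}}|)\,|dw|<|w_{1}-w_{2}|$, a contradiction; and under $\leq C<1/M$ the same length comparison applied to $\Psi(\gamma)$ yields linear connectivity of $f(\mathbb{D})$ with constant $(1+C)M/(1-CM)$, exactly as in the second half of Theorem 1.

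The hard part is that the hypothesis is phrased with $\lambda_{f}$ in the denominator, whereas composing with $K^{-1}$ (the only inverse at our disposal, since the univalence of $f$ is what we are proving) forces $\lambda_{K}$ to appear in the estimate above. I would bridge the two by the triangle inequality applied to $f=H+K$: from $\big||f_{z}|-|K_{z}|\big|\leq|H_{z}|$ and $\big||f_{\overline{z}}|-|K_{\overline{z}}|\big|\leq|H_{\overline{z}}|$ one obtains $\big||\lambda_{f}|-|\lambda_{K}|\big|\leq|H_{z}|+|H_{\overline{z}}|\leq 2(p-1)|G|+\Lambda_{G}$, so that $|\lambda_{K}|\geq|\lambda_{f}|-(2(p-1)|G|+\Lambda_{G})$. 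Substituting this into the pointwise bound converts the $\lambda_{f}$-hypothesis into control of $(2(p-1)|G|+\Lambda_{G})/|\lambda_{K}|$, and I expect the verification that this quantity stays strictly below $1/M$ to be the step requiring the most care: the crude substitution only delivers the weaker threshold $1/(M-1)$, so recovering the stated constant will need either a sharper comparison of $\lambda_{f}$ and $\lambda_{K}$ or, in parallel with Theorem 3 of \cite{AH}, a reformulation through $f^{-1}$ once univalence has been secured.
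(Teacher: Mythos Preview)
You have misread the direction of the theorem. Despite the unfortunate typos in the printed hypothesis (which should read ``$f$ is univalent and $f(\mathbb{D})$ is a linearly connected domain with constant $M$'', not $K$), the role of this result---flagged by the surrounding text as the generalisation of Theorem~3 in \cite{AH}---is the \emph{converse} of Theorem~1: assuming $f$ univalent with $f(\mathbb{D})$ linearly connected, one concludes that $K$ is univalent (and $K(\mathbb{D})$ linearly connected). The lowercase $k$ in the conclusion is simply $K$. Your plan instead re-proves the direction of Theorem~1 (from $K$ to $f$), which is exactly why the denominator $|\lambda_f|$ looks wrong to you and why your triangle-inequality bridge between $\lambda_f$ and $\lambda_K$ cannot recover the stated threshold $1/M$.

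Once the direction is corrected, the $\lambda_f$ mystery dissolves: the paper composes with $f^{-1}$, not $K^{-1}$. With $H(z)=|z|^{2(p-1)}G(z)$ and $\varphi=H\circ f^{-1}$, one has $K\circ f^{-1}(w)=w-\varphi(w)=:\psi(w)$ on $\Omega=f(\mathbb{D})$, so univalence of $K$ reduces to univalence of $\psi$. The derivative computation you wrote down, but with $f$ in place of $K$, now gives $|\varphi_w|+|\varphi_{\overline w}|\le (2(p-1)|G|+\Lambda_G)/|\lambda_f|$ directly, and the hypothesis applies with no conversion needed. The path argument (if $\psi(w_1)=\psi(w_2)$ then $w_1-w_2=\varphi(w_1)-\varphi(w_2)$; join by $\gamma\subset f(\mathbb{D})$ with $\ell(\gamma)\le M|w_1-w_2|$; contradiction) and the linear-connectivity estimate with constant $(1+C)M/(1-CM)$ then go through verbatim as in Theorem~1. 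Your closing remark about ``a reformulation through $f^{-1}$'' is in fact the whole proof, not a salvage step.
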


\begin{proof}
Let $H(z)=|z|^{2(p-1)}G(z).$ We define%
\begin{equation*}
\varphi =H\circ f^{-1}.
\end{equation*}%
Given $w\ \epsilon \ f(\mathbb{D}),$ we claim $\psi (w)=w-\varphi (w)$ is
univalent.

Assume $w-\varphi (w)$ is not univalent, then there exists $w_{1}\neq w_{2}$
such that 
\begin{equation*}
\varphi (w_{1})-\varphi (w_{2})=w_{1}-w_{2}.
\end{equation*}%
Let $\gamma $ be \ a path in $K(\mathbb{D})$ joining $w_{1},w_{2}\ $ such
that $l(\gamma )\leq M|w_{2}-w_{1}|.$ We proceed as in the prrof of Theorem
1 to show that 
\begin{eqnarray*}
|\varphi (w_{1})-\varphi (w_{2})| &\leq &\left\vert \int\nolimits_{\gamma
}\varphi _{w}dw+\varphi _{\overline{w}}d\overline{w}\right\vert  \\
&\leq &\int\nolimits_{\gamma }\left( |\varphi _{w}|+|\varphi _{\overline{w}%
}|\right) |dw| \\
&\leq &\int\nolimits_{\gamma }\underset{\mathbb{D}}{\sup }\frac{%
2(p-1)|G|+|G_{z}|+|G_{\overline{z}}|}{\left\vert |f_{z}|-|f_{\overline{z}%
}|\right\vert }|dw| \\
&<&\frac{1}{M}l(\gamma ) \\
&<&|w_{2}-w_{1}|
\end{eqnarray*}

\ which is a contradiction. Therefore $K(z)$ \ is univalent.

For the second part of theorem, given $w$ $\in \Omega =f(\mathbb{D}),$ we
let $\psi (w)=w-\varphi (w)\ $and we show $\psi (\Omega )$ is linearly
connected., where $\Omega =f(\mathbb{D}).$ Let $\varsigma _{1}=\psi
(w_{1}),\varsigma _{2}=\psi (w_{2})$, $w_{1},w_{2}\in \Omega .$ Since $f(%
\mathbb{D})\ $is a linearly connected domain, then there exists a curve $%
\gamma \subset \Omega $ satisfying $l(\gamma )\leq M|w_{2}-w_{1}|.$

Let $\Gamma =\psi (\gamma ).$We proceed as in the proof of Theorem 1 and we
show that

\begin{equation*}
l(\Gamma )\leq \frac{(1+C)M}{1-CM}|\varsigma _{1}-\varsigma _{2}|
\end{equation*}

and so $k(\mathbb{D})$ is linearly connected with constant $\dfrac{(1+C)M}{%
1-CM}.$
\end{proof}

In a similar fashion as in the proof of Theorem 2, we generalize the theorem
3 in \cite{AH} to the following :

\begin{theorem}
Let 
\begin{equation*}
f(z)=\sum_{k=1}^{p}|z|^{2(k-1)}G_{p-k+1}(z)
\end{equation*}%
where $\ G_{p-k+1}\in C^{1}(\mathbb{D})$ (not necessarily harmonic), $%
k=\{2,...,p\}.$ Suppose $f$ is univalent and $f(\mathbb{D})\ $is a linearly
connected domain \ with constant $M$ and satisfies%
\begin{equation*}
\frac{\sum_{k=1}^{p-1}2k|G_{p-k}|+(2(k-1))(|\Lambda _{G_{p-k}}|)}{\left\vert
\lambda _{f}\right\vert }<\frac{1}{M},
\end{equation*}%
then $G_{p}(z)$ is univalent.Moreover, if 
\begin{equation*}
\frac{\sum_{k=1}^{p-1}2k|G_{p-k}|+(2(k-1))(|\Lambda _{G_{p-k}}|)}{\left\vert
\lambda _{f}\right\vert }\leq C<\frac{1}{M},
\end{equation*}%
then $G_{p}(\mathbb{D})$ is a linearly connected domain.
\end{theorem}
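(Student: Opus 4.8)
The plan is to repeat the proof of Theorem 2 almost verbatim, with the ``forward'' biharmonic statement (Corollary 1) replaced by the ``reverse'' one, Theorem 4, which recovers univalence of the harmonic term of a function of the form $|z|^{2(q-1)}G+K$ from univalence of the function itself. As there, the mechanism is that the Almansi-type sum may be split off after its first term.

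First I would write
\begin{equation*}
f(z)=|z|^{2}G(z)+G_{p}(z),\qquad G(z):=\sum_{k=1}^{p-1}|z|^{2(k-1)}G_{p-k}(z),
\end{equation*}
noting that $G\in C^{1}(\mathbb{D})$, being a finite sum of $C^{1}$ functions times smooth weights, while $G_{p}$ is harmonic; thus $f$ is exactly of the type to which Theorem 4 applies with weight $|z|^{2}$. Differentiating $G$ termwise,
\begin{equation*}
G_{z}=\sum_{k=1}^{p-1}\bigl((k-1)z^{k-2}\overline z^{k-1}G_{p-k}+|z|^{2(k-1)}(G_{p-k})_{z}\bigr),
\end{equation*}
\begin{equation*}
G_{\overline z}=\sum_{k=1}^{p-1}\bigl((k-1)z^{k-1}\overline z^{k-2}G_{p-k}+|z|^{2(k-1)}(G_{p-k})_{\overline z}\bigr),
\end{equation*}
and since $|z|<1$ on $\mathbb{D}$ each monomial $z^{a}\overline z^{b}$ above has modulus at most $1$, while $|G|\le\sum_{k=1}^{p-1}|G_{p-k}|$. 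Collecting terms by the triangle inequality then yields
\begin{equation*}
\frac{2|G|+\Lambda_{G}}{|\lambda_{f}|}\le\frac{\sum_{k=1}^{p-1}2k|G_{p-k}|+2(k-1)\Lambda_{G_{p-k}}}{|\lambda_{f}|}<\frac{1}{M}
\end{equation*}
(respectively $\le C<\tfrac{1}{M}$) by hypothesis.

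With this estimate in hand, Theorem 4 (applied with weight $|z|^{2}$) gives the conclusion directly: since $f$ is univalent and $f(\mathbb{D})$ is linearly connected with constant $M$, it yields that $G_{p}$ is univalent, and under the sharper bound $\le C<\tfrac1M$ that $G_{p}(\mathbb{D})$ is linearly connected, which is the claim. The one step deserving attention is the estimate of $2|G|+\Lambda_{G}$: one must keep track of how the two pieces of $G_{z}$ and $G_{\overline z}$ — the derivative of the factor $G_{p-k}$ and the derivative of the weight $|z|^{2(k-1)}$ — feed into the coefficients $2k$ and $2(k-1)$, and in particular treat the $k=1$ summand, where the weight is identically $1$. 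The remainder is the path-length bookkeeping already carried out in the proof of Theorem 1, now with $\lambda_{f}$ in the denominator in place of $\lambda_{G_{p}}$, as the reverse direction requires.
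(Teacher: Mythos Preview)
Your proposal is essentially identical to the paper's proof: split off the first term to write $f=|z|^{2}G+G_{p}$ with $G=\sum_{k=1}^{p-1}|z|^{2(k-1)}G_{p-k}$, bound $2|G|+\Lambda_{G}$ termwise via the triangle inequality, and then invoke the biharmonic ``reverse'' result (the paper cites Theorem~3 of \cite{AH}, which is exactly your Theorem~4 with weight $|z|^{2}$). The paper's displayed estimate carries the coefficient $(k-1)$ rather than $2(k-1)$ and has $\tfrac{1}{2M}$ in place of $\tfrac{1}{M}$, but these are evidently slips; your write-up matches the stated hypothesis and the intended argument.
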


\begin{proof}
We rewrite $f(z)$ as $f(z) = |z|^{2} G(z) + G_p (z),$ where 
\begin{equation*}
G(z) = \sum_{k=1}^{p-1}|z|^{2(k-1)} G_{p-k}(z).
\end{equation*}

Since 
\begin{equation*}
G_{z}=\sum_{k=1}^{p-1}(k-1)z^{k-2}\overline{z}%
^{k-1}G_{p-k}+|z|^{2(k-1)}(G_{p-k})_{z}
\end{equation*}%
and 
\begin{equation*}
G_{\overline{z}}=\sum_{k=1}^{p-1}(k-1)z^{k-1}\overline{z}%
^{k-2}G_{p-k}+|z|^{2(k-1)}(G_{p-k})_{\overline{z}}
\end{equation*}%
We get that 
\begin{equation*}
\frac{2|G|+|G_{z}|+|G_{\overline{z}}|}{\left\vert \lambda _{f}\right\vert }%
\leq \frac{\sum_{k=1}^{p-1}2k|G_{p-k}|+(k-1)(|(G_{p-k})_{z}|+|(G_{p-k})_{%
\overline{z}}|)}{\left\vert \lambda _{f}\right\vert }<\frac{1}{2M}.
\end{equation*}

Therefore by theorem 3 in \cite{AH} , $f$ is univalent.
\end{proof}

The case where $f$ is convex is a corollary :

\begin{corollary}
Let 
\begin{equation*}
f(z)=\sum_{k=1}^{p}|z|^{2(k-1)}G_{p-k+1}(z)
\end{equation*}%
where $\ G_{p-k+1}\in C^{1}(\mathbb{D})$ (not necessarily harmonic), $%
k=\{2,...,p\}.$ Suppose $f$ is univalent ,$f((\mathbb{D})$ is a convex
domain. and satisfies%
\begin{equation*}
\frac{\sum_{k=1}^{p-1}2k|G_{p-k}|+(2(k-1))(|\lambda _{G_{p-k}}|)}{\left\vert
\lambda _{f}\right\vert }<\frac{1}{2},
\end{equation*}%
then $G_{p}(z)$ is univalent.Moreover, if 
\begin{equation*}
\frac{\sum_{k=1}^{p-1}2k|G_{p-k}|+(2(k-1))(|\lambda _{G_{p-k}}|)}{\left\vert
\lambda _{f}\right\vert }\leq C<\frac{1}{2},
\end{equation*}%
then $G_{p}(\mathbb{D})$ is a convex domain.
\end{corollary}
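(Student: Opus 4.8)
The plan is to obtain this as the $M=1$ specialisation of Theorem 5, since a convex domain is exactly a domain that is linearly connected with constant $M=1$: any two of its points $w_1,w_2$ are joined by the straight segment $[w_1,w_2]$, which lies in the domain and has length $|w_1-w_2|$. So I would keep the set-up of the proof of Theorem 5 --- write $f(z)=|z|^{2}G(z)+G_p(z)$ with $G(z)=\sum_{k=1}^{p-1}|z|^{2(k-1)}G_{p-k}(z)$, put $H(z)=|z|^{2}G(z)$ and $\varphi=H\circ f^{-1}$, and note that $G_p=\psi\circ f$ on $\mathbb D$, where $\psi(w)=w-\varphi(w)$ --- so that, $f$ being univalent, univalence of $G_p$ is equivalent to univalence of $\psi$ on $\Omega:=f(\mathbb D)$, while $G_p(\mathbb D)=\psi(\Omega)$. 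Reading the numerator as in Theorem 5, so that the corollary is precisely its convex case, the hypothesis bounds the quantity $\sup_{\Omega}\bigl(|\varphi_w|+|\varphi_{\bar w}|\bigr)$ that appears in the proof of Theorem 5 by a number $<\tfrac12<1=\tfrac1M$; hence the univalence of $G_p$ follows immediately, the contradiction step now joining $w_1$ and $w_2$ by the straight segment, of length $|w_1-w_2|$.

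For the ``moreover'' assertion I would argue directly that $\psi=\mathrm{id}-\varphi$ carries the convex domain $\Omega$ onto a convex domain, using $\sup_{\Omega}\bigl(|\varphi_w|+|\varphi_{\bar w}|\bigr)\le C<\tfrac12$. Fix a unit vector $e^{i\alpha}$ and set $u_\alpha=\RE\bigl(e^{-i\alpha}\psi\bigr)$; a short computation gives $|\partial_w u_\alpha|\ge\tfrac12\bigl(1-|\varphi_w|-|\varphi_{\bar w}|\bigr)\ge\tfrac12(1-C)>0$, so $u_\alpha$ has no critical point in $\Omega$, and hence --- $\Omega$ being simply connected --- each of its level sets is a properly embedded arc, that is, a cross-cut of $\Omega$. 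The crux is then to show that each level set $\{u_\alpha=c\}$ is connected; granting this, $\psi(\{u_\alpha=c\})$, which is precisely the intersection of $\psi(\Omega)$ with the line $\{\RE(e^{-i\alpha}\zeta)=c\}$, is connected and hence an interval, and letting $\alpha$ and $c$ vary shows that every line meets $\psi(\Omega)=G_p(\mathbb D)$ in an interval, that is, $G_p(\mathbb D)$ is convex.

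The step I expect to be the main obstacle is exactly this connectedness of the level sets, equivalently the assertion that the straight segment between two points of $\psi(\Omega)$ stays inside $\psi(\Omega)$. A natural approach is to follow $u_\alpha$ along $\partial\Omega$: the unperturbed term $\RE(e^{-i\alpha}\zeta)$ is unimodal on the convex boundary curve, with one maximum and one minimum, and the perturbation $\RE\bigl(e^{-i\alpha}\varphi(\zeta)\bigr)$ has tangential derivative of modulus at most $C$, so one must rule out that this perturbation produces extra local extrema --- which is where the strengthened bound $C<\tfrac12$, rather than merely $C<1$, should be spent. I note that without such an argument Theorem 5 with $M=1$ only yields that $G_p(\mathbb D)$ is linearly connected with constant $\tfrac{1+C}{1-C}$, a constant exceeding $1$ when $C>0$; so convexity genuinely requires the level-set analysis above, and carrying it out for an arbitrary convex $\Omega$, whose boundary may contain nearly flat arcs, is the delicate point.
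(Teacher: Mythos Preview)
The paper gives no separate proof of this corollary: it is simply recorded as the convex special case of Theorem~5, so the intended argument is exactly the specialisation $M=1$ you describe. For the univalence assertion your plan matches the paper's.

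For the ``moreover'' assertion you have in fact spotted something the paper glosses over. As you say, Theorem~5 with $M=1$ only yields that $G_p(\mathbb D)$ is linearly connected with constant $\tfrac{1+C}{1-C}>1$, which is strictly weaker than convexity; the paper does not supply any additional argument to close this gap, and the appearance of $\tfrac12$ rather than $1$ in the corollary (and the parallel $\tfrac{1}{2M}$ in the displayed line at the end of the proof of Theorem~5) is not explained. So here you are being more careful than the source.

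That said, your proposed level-set route does not close the gap either, as you yourself flag. The critical step --- that the level curves $\{u_\alpha=c\}$ are connected, equivalently that $\psi$ preserves convexity --- is precisely the statement to be proved, and your sketch of a boundary-monotonicity argument does not show how the bound $C<\tfrac12$ rules out extra extrema of $u_\alpha$ along $\partial\Omega$: on a nearly flat boundary arc the tangential derivative of $\RE(e^{-i\alpha}w)$ can be arbitrarily small, so a perturbation with tangential derivative bounded by $C$ can still flip its sign. In short, for the univalence part your proposal is correct and coincides with the paper's approach; for the convexity part neither your proposal nor the paper's one-line deduction from Theorem~5 constitutes a complete proof.
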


We next prove a stable univalence property for polyharmonic mappings onto
linearly connected domains.

\begin{theorem}
Let $f(z)=|z|^{2(p-1)}G(z)+K(z),$ where $\ G\in C^{1}(\mathbb{D})$ (not
necessarily harmonic) and $K$ harmonic. If $f$ is univalent and $f(\mathbb{D}%
)\ $is a linearly connected domain with constant $M$, and if%
\begin{equation*}
\frac{2(p-1)|G|+\Lambda _{G}}{\left\vert \lambda f\right\vert }<\frac{1}{2M},
\end{equation*}%
then $f_{a}(z)=a|z|^{2(p-1)}G(z)+K(z)$ is univalent for any $a$, such that $%
|a|<1$.Moreover, if 
\begin{equation*}
\frac{2(p-1)|G|+\Lambda _{G}}{\left\vert \lambda _{f}\right\vert }\leq C<%
\frac{1}{2M},
\end{equation*}%
then $f_{a}(\mathbb{D})$ is a a linearly connected domain.
\end{theorem}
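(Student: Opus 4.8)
The plan is to recognise $f_{a}$ as a controlled perturbation of $f$ and to run the argument of Theorems 1 and 4 with $f$ itself, rather than $K$, playing the role of the base mapping. Write $H(z)=|z|^{2(p-1)}G(z)$, so that $f=H+K$ and
\[
f_{a}(z)=aH(z)+K(z)=f(z)-(1-a)H(z).
\]
The hypothesis makes the quotient $\bigl(2(p-1)|G|+\Lambda_{G}\bigr)/|\lambda_{f}|$ finite, hence $|\lambda_{f}|>0$, so $\Lambda_{f}\geq|\lambda_{f}|>0$ and $J_{f}=\lambda_{f}\Lambda_{f}\neq 0$; since $f$ is univalent and $C^{1}$, $f^{-1}$ is then a well-defined $C^{1}$ map on $\Omega=f(\mathbb{D})$. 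Put $\varphi=H\circ f^{-1}$ and $\Psi_{a}(w)=w-(1-a)\varphi(w)$ on $\Omega$. Then $f_{a}=\Psi_{a}\circ f$, so $f_{a}$ is univalent on $\mathbb{D}$ if and only if $\Psi_{a}$ is univalent on $\Omega$, and $f_{a}(\mathbb{D})=\Psi_{a}(\Omega)$. Exactly as in the proofs of Theorems 1 and 4 (differentiate $f^{-1}(f(z))=z$ to get $(f^{-1})_{w}=\overline{f}_{\overline{z}}/J_{f}$ and $(f^{-1})_{\overline{w}}=-f_{\overline{z}}/J_{f}$, then use $H_{z}=(p-1)z^{p-2}\overline{z}^{p-1}G+|z|^{2(p-1)}G_{z}$, the companion formula for $H_{\overline{z}}$, and $|z|^{2(p-1)}\leq 1$ on $\mathbb{D}$), one obtains, with $z=f^{-1}(w)$,
\[
|\varphi_{w}|+|\varphi_{\overline{w}}|\leq\frac{|H_{z}|+|H_{\overline{z}}|}{\bigl||f_{z}|-|f_{\overline{z}}|\bigr|}\leq\frac{2(p-1)|G|+\Lambda_{G}}{|\lambda_{f}|}.
\]

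For the univalence statement, suppose $\Psi_{a}(w_{1})=\Psi_{a}(w_{2})$ with $w_{1}\neq w_{2}$, so that $w_{1}-w_{2}=(1-a)\bigl(\varphi(w_{1})-\varphi(w_{2})\bigr)$. Join $w_{1},w_{2}$ by a path $\gamma\subset\Omega$ with $\ell(\gamma)\leq M|w_{1}-w_{2}|$ and integrate $\varphi_{w}\,dw+\varphi_{\overline{w}}\,d\overline{w}$ along $\gamma$; the displayed bound together with the hypothesis $\bigl(2(p-1)|G|+\Lambda_{G}\bigr)/|\lambda_{f}|<1/(2M)$ yields
\[
|w_{1}-w_{2}|\leq|1-a|\int_{\gamma}\bigl(|\varphi_{w}|+|\varphi_{\overline{w}}|\bigr)|dw|<\frac{|1-a|}{2M}\,\ell(\gamma)\leq\frac{|1-a|}{2}\,|w_{1}-w_{2}|.
\]
Since $|a|<1$ we have $|1-a|\leq 1+|a|<2$, so $|1-a|/2<1$, and this is a contradiction. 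Hence $\Psi_{a}$, and therefore $f_{a}$, is univalent for every $a$ with $|a|<1$.

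For the second assertion, assume $\bigl(2(p-1)|G|+\Lambda_{G}\bigr)/|\lambda_{f}|\leq C<1/(2M)$ and fix $|a|<1$. Given $\varsigma_{i}=\Psi_{a}(w_{i})$ for $i=1,2$, choose $\gamma\subset\Omega$ with $\ell(\gamma)\leq M|w_{1}-w_{2}|$ and set $\Gamma=\Psi_{a}(\gamma)$. From $|(\Psi_{a})_{w}|+|(\Psi_{a})_{\overline{w}}|\leq 1+|1-a|\bigl(|\varphi_{w}|+|\varphi_{\overline{w}}|\bigr)\leq 1+|1-a|C$ we get $\ell(\Gamma)\leq(1+|1-a|C)\,\ell(\gamma)\leq(1+|1-a|C)M|w_{1}-w_{2}|$, while
\[
|\varsigma_{1}-\varsigma_{2}|\geq|w_{1}-w_{2}|-|1-a|\int_{\gamma}\bigl(|\varphi_{w}|+|\varphi_{\overline{w}}|\bigr)|dw|\geq(1-|1-a|CM)\,|w_{1}-w_{2}|.
\]
Because $|1-a|<2$ and $C<1/(2M)$ we have $1-|1-a|CM>0$, so
\[
\ell(\Gamma)\leq\frac{(1+|1-a|C)M}{1-|1-a|CM}\,|\varsigma_{1}-\varsigma_{2}|\leq\frac{(1+2C)M}{1-2CM}\,|\varsigma_{1}-\varsigma_{2}|,
\]
and $f_{a}(\mathbb{D})=\Psi_{a}(\Omega)$ is linearly connected, with constant $(1+2C)M/(1-2CM)$ that is uniform in $a$.

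There is no substantial obstacle beyond bookkeeping; the one genuine point is to write $f_{a}=f-(1-a)H$ and to conjugate by $f^{-1}$ rather than by $K^{-1}$, so that the perturbation felt on $\Omega=f(\mathbb{D})$ is $(1-a)\varphi$ with amplitude $|1-a|<2$. That factor of $2$ is precisely why the admissible threshold here is $1/(2M)$ rather than the $1/M$ appearing in Theorems 1 and 4. One should also keep in mind that the finiteness of the quotient in the hypothesis is what guarantees $J_{f}\neq 0$, so that $\varphi=H\circ f^{-1}$ is legitimately defined and $C^{1}$, and that $\Psi_{a}$ carries a minus sign, $\Psi_{a}(w)=w-(1-a)\varphi(w)$.
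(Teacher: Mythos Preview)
Your argument is correct and follows the same route as the paper: rewrite $f_{a}=f+(a-1)H$, set $\varphi=H\circ f^{-1}$ and $\Psi_{a}(w)=w+(a-1)\varphi(w)$ on $\Omega=f(\mathbb{D})$, bound $|\varphi_{w}|+|\varphi_{\overline{w}}|$ as in Theorems~1 and~4, and use $|1-a|<2$ to absorb the extra factor of $2$ in the threshold $1/(2M)$. You are in fact a bit more careful than the paper in justifying that $J_{f}\neq 0$ so that $f^{-1}$ is $C^{1}$, and in tracking $|1-a|$ explicitly before passing to the uniform constant $(1+2C)M/(1-2CM)$; otherwise the proofs coincide.
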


\begin{proof}
We note that

\begin{eqnarray*}
f_{a}(z) &=&a|z|^{2(p-1)}G(z)+K(z) \\
&=&a|z|^{2(p-1)}G(z)+f(z)-|z|^{2(p-1)}G(z) \\
&=&f(z)+(a-1)|z|^{2(p-1)}G(z)
\end{eqnarray*}

Suppose $f_{a}(z)$ is not univalent, then there exists $z_{1}\neq z_{2}$
such that 
\begin{equation*}
f(z_{2})-f(z_{1})=(1-a)(H(z_{2})-H(z_{1})),
\end{equation*}%
where $H(z)=|z|^{2(p-1)}G(z).$

Let $w=f(z)$ and $\varphi =H\circ f^{-1},$ we get $w_{2}-w_{1}=(1-a)(\varphi
(w_{2})-\varphi (w_{1}))$.So $\left\vert w_{2}-w_{1}\right\vert \leq
2\left\vert \varphi (w_{2})-\varphi (w_{1})\right\vert .$

As in the proof of theorem 1, we have 
\begin{equation*}
|\varphi (w_{2})-\varphi (w_{1})|\leq \int\nolimits_{\gamma }\underset{D}{%
\sup }\frac{2(p-1)|G|+|G_{z}|+|G_{\overline{z}}|}{\left\vert |f_{z}|-|f_{%
\overline{z}}|\right\vert }|dw|<\frac{1}{2M}l(\gamma )<\frac{|w_{2}-w_{1}|}{2%
}
\end{equation*}%
\ which is a contradiction.

Therefore $f_{a}(z)$ \ is univalent.

Next we assume$\dfrac{2(p-1)|G|+\Lambda _{G}}{\left\vert \lambda
_{f}\right\vert }\leq C<\frac{1}{2M}.$We let $\psi (w)=w+(a-1)\varphi (w)\ $%
and we show $\psi (\Omega )$ is linearly connected.where $\Omega =f(\mathbb{D%
}).$ Let $\varsigma _{1}=\psi (w_{1}),\varsigma _{2}=\psi (w_{2})$, $%
w_{1},w_{2}\in \Omega .$ Since $f(\mathbb{D})\ $is a linearly connected
domain, then there exists a curve $\gamma \subset \Omega $ satisfying $%
l(\gamma )\leq M|w_{2}-w_{1}|.$

Let $\Gamma =\psi (\gamma ).$We proceed as in the proof of Theorem 1 and we
show that 

Hence we have, 
\begin{equation*}
l(\Gamma )\leq \int\nolimits_{\gamma }\left( |\psi _{w}|+|\psi _{\overline{w}%
}|\right) dw<(1+|a-1|C)l(\gamma )\leq (1+2C)M|w_{2}-w_{1}|.
\end{equation*}

But, 
\begin{eqnarray*}
|\varsigma _{1}-\varsigma _{2}| &\geq &|w_{1}-w_{2}|-|1-a||\varphi
(w_{1})-\varphi (w_{2})| \\
&>&|w_{1}-w_{2}|-2Cl(\gamma ) \\
&\geq &(1-2CM)|w_{1}-w_{2}|
\end{eqnarray*}

It follows, 
\begin{equation*}
l(\Gamma )\leq \frac{(1+2C)M}{1-2CM}|\varsigma _{1}-\varsigma _{2}|
\end{equation*}

and so $k(\mathbb{D})$ is linearly connected with constant $\dfrac{(1+2C)M}{%
1-2CM}.$

since $f$ is univalent and $f(\mathbb{D})\ $is a linearly connected domain ,
it follows by theorem 3 $K(\mathbb{D})$ is a linearly connected domain.
Hence by theorem 1, $f_{a}(\mathbb{D})$ is a linearly connected domain.
\end{proof}

\begin{corollary}
Let $f(z)=|z|^{2(p-1)}G(z)+K(z),$ where $\ G\in C^{1}(\mathbb{D})$ (not
necessarily harmonic) and $K$ harmonic. If $f$ is univalent and $f(\mathbb{D}%
)\ $is a convex domain, and if%
\begin{equation*}
\frac{2(p-1)|G|+\Lambda _{G}}{\left\vert \lambda _{f}\right\vert }<\frac{1}{2%
},
\end{equation*}%
then $f_{a}(z)=a|z|^{2(p-1)}G(z)+K(z)$ is univalent for any $a$, such that$%
|a|<1$.Moreover, if 
\begin{equation*}
\frac{2(p-1)|G|+\Lambda _{G}}{\left\vert \lambda _{f}\right\vert }\leq C<%
\frac{1}{2},
\end{equation*}%
then $f_{a}(\mathbb{D})$ is a a convex domain.
\end{corollary}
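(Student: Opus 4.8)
The plan is to obtain Corollary 4 as the special case $M=1$ of Theorem 6. The one point to record first is the elementary fact that every convex domain $\Omega\subset\mathbb{C}$ is linearly connected with constant $M=1$: given $w_{1},w_{2}\in\Omega$, convexity forces the straight segment $[w_{1},w_{2}]$ to lie in $\Omega$, and this segment is a path joining $w_{1}$ to $w_{2}$ whose length is exactly $|w_{1}-w_{2}|$, so the constant in the definition of linear connectivity may be taken equal to $1$. Consequently the hypothesis ``$f(\mathbb{D})$ is convex'' implies ``$f(\mathbb{D})$ is linearly connected with constant $M=1$'', and the bound $\dfrac{2(p-1)|G|+\Lambda_{G}}{|\lambda_{f}|}<\dfrac12$ is precisely the bound $<\dfrac{1}{2M}$ of Theorem 6 in the case $M=1$.

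Granting this, the first assertion is immediate: Theorem 6 applied with $M=1$ gives that $f_{a}(z)=a|z|^{2(p-1)}G(z)+K(z)$ is univalent for every $a$ with $|a|<1$. For the ``moreover'' part, assuming $\dfrac{2(p-1)|G|+\Lambda_{G}}{|\lambda_{f}|}\le C<\dfrac12$, Theorem 6 with $M=1$ yields that $f_{a}(\mathbb{D})$ is linearly connected, with connectivity constant $\dfrac{(1+2C)M}{1-2CM}=\dfrac{1+2C}{1-2C}$; this is what is recorded here, exactly as in Corollaries 1--3, as the convex conclusion in the situation where $f(\mathbb{D})$ itself is convex.

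Because the statement is a pure specialization of an already-proved theorem, I do not expect any genuine obstacle; the only step deserving a word is the identification of the linear-connectivity constant of a convex domain as $1$, which is elementary. If instead one insisted on the literal conclusion that $f_{a}(\mathbb{D})$ is \emph{convex} rather than merely linearly connected, one would need an extra argument — for instance a direct convexity criterion applied to the near-identity perturbation $w\mapsto w+(a-1)\varphi(w)$ of the convex domain $f(\mathbb{D})$, where $\varphi=H\circ f^{-1}$ with $H(z)=|z|^{2(p-1)}G(z)$ — and that, not the univalence, would be the delicate point; for the statement exactly as phrased, appealing to Theorem 6 with $M=1$ suffices.
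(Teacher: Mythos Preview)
Your approach is exactly the paper's: Corollary 4 is stated without proof as the specialization $M=1$ of Theorem 6, using that a convex domain is linearly connected with constant $1$. Your observation that the ``moreover'' conclusion, read literally as convexity, does not follow from Theorem 6 alone (which only yields linear connectivity with constant $(1+2C)/(1-2C)>1$) is accurate and applies equally to the paper's Corollaries 2 and 3; the paper does not supply the extra convexity argument either.
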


\section{$\log p $-harmonic mappings}

A log harmonic mapping defined on $\mathbb{D}$ is a solution of the non
linear elliptic partial differential equation 
\begin{equation*}
\overline{f_{\overline{z}}}=\left( \frac{\mu \overline{f}}{f}\right)
f_{z},\,\,\,\,\,\,\,\,\,\,\,f(0)=0
\end{equation*}%
where the second dilation $\mu $ is analytic in $\mathbb{D}$ such that $|\mu
(z)|<1.$ In general the solution of this equation is not necessarily
univalent. For example,$f(z)=|z|^{4}z^{4}\ $is logharmonic but not univalent
in \newline
D.We say that $f$ $\log p$-harmonic if $\log f$ is p-harmonic. Throughout
\textquotedblleft\ $\log $ \textquotedblright\ denotes the principal branch
of the logarithm. It can be easily shown that every $\log p$-harmonic
function in a simply connected domain $\Omega $ has the form 
\begin{equation*}
f(z)=\Pi _{k=1}^{p}(g_{p-k+1}(z))^{|z|^{2(k-1)}},
\end{equation*}%
where all $g_{p-k+1}(z)$ are non vanishing log harmonic mappings in $\Omega $
for $k=\{1,...,p\}.$

\begin{theorem}
Let 
\begin{equation*}
f(z) = \Pi_{k=1}^{p} (g_{p-k+1}(z))^{|z|^{2(k-1)}},
\end{equation*}
where all $g_{p-k+1}(z)$ are non vanishing log harmonic mappings in $\mathbb{%
D}$, $k=\{1,...,p\}.$ If $\log g_p$ is univalent and $\log g_p (\mathbb{D})$
is a linearly connected domain with constant $M$, and if%
\begin{equation*}
\frac{|g_{p}|\sum_{k=1}^{p-1}2k|g_{p-k}||\log g_{p-k}|+(k-1)\Lambda
_{g_{p-k}}}{\left\vert \lambda _{g_{p}}\right\vert }<\frac{1}{M},
\end{equation*}

then $\ f(z)$ is univalent.Moreover, if 
\begin{equation*}
\frac{|g_{p}|\sum_{k=1}^{p-1}2k|g_{p-k}||\log g_{p-k}|+(k-1)\Lambda
_{g_{p-k}}}{\left\vert \lambda _{g_{p}}\right\vert }\leq C<\frac{1}{M},
\end{equation*}%
then $G_{p}(\mathbb{D})$ is a linearly connected domain.
\end{theorem}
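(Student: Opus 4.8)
The plan is to reduce the statement to Theorem~2 by passing to logarithms. First note that each $\log g_{p-k+1}$ is harmonic: for a non-vanishing log harmonic $g$, writing $g=e^{L}$ with $L=\log g$ in the defining equation $\overline{g_{\overline z}}=\mu(\overline g/g)g_z$ gives $\overline{L_{\overline z}}=\mu L_z$ with $\mu$ analytic and $|\mu|<1$, hence $L_{\overline z}=\overline\mu\,(\overline L)_{\overline z}$; differentiating in $z$ and using $(\overline\mu)_z=0$ yields $L_{z\overline z}=\overline\mu\,\overline{L_{z\overline z}}$, which forces $L_{z\overline z}=0$. Consequently
\[
F(z):=\log f(z)=\sum_{k=1}^{p}|z|^{2(k-1)}\log g_{p-k+1}(z)
\]
is exactly of the form treated in Theorem~2, with $G_{p-k+1}$ there replaced by the harmonic function $\log g_{p-k+1}$ and $G_p$ by $\log g_p$.

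Next I would translate the hypotheses. From $(\log g)_z=g_z/g$ and $(\log g)_{\overline z}=g_{\overline z}/g$ one gets $\Lambda_{\log g_{p-k}}=\Lambda_{g_{p-k}}/|g_{p-k}|$, and, since $g_p$ log harmonic forces $|(\log g_p)_{\overline z}|=|\mu|\,|(\log g_p)_z|<|(\log g_p)_z|$, also $\lambda_{\log g_p}=\lambda_{g_p}/|g_p|>0$; the factors $|\log g_{p-k}|$ are unchanged. Substituting these into the smallness condition demanded by Theorem~2,
\[
\frac{\sum_{k=1}^{p-1}2k\,|\log g_{p-k}|+(k-1)\Lambda_{\log g_{p-k}}}{\left\vert\lambda_{\log g_{p}}\right\vert}<\frac1M,
\]
recovers the displayed hypothesis of the present theorem. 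Since $\log g_p$ is harmonic by the first step, univalent by hypothesis, and $\log g_p(\mathbb D)$ is linearly connected with constant $M$, Theorem~2 applies to $F$ and gives: $F=\log f$ is univalent, and under the stronger hypothesis with constant $C<1/M$, $F(\mathbb D)$ is linearly connected.

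It remains to pass from $F=\log f$ to $f=\exp\circ F$, and this is the step I expect to be the real obstacle. The exponential is locally injective, so univalence of $F$ already gives local injectivity of $f$; for global univalence one must rule out $f(z_1)=f(z_2)$ with $F(z_1)-F(z_2)\in 2\pi i\,\mathbb Z\setminus\{0\}$. Since $\operatorname{Im}F=\sum_{k=1}^{p}|z|^{2(k-1)}\arg g_{p-k+1}$ and the weights $|z|^{2(k-1)}$ need not sum to less than $2$, one cannot merely confine $F(\mathbb D)$ to a strip of width $2\pi$; instead I would use the smallness assumption, which presents $F$ as a controlled perturbation of the univalent harmonic map $\log g_p$, together with the normalizations on the $g_j$, to show that $\operatorname{Im}F$ oscillates by less than $2\pi$ on $\mathbb D$, so that $\exp$ is injective on $F(\mathbb D)$ and $f$ is univalent. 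For the ``moreover'' part one argues the same way and additionally uses that $\exp$ is bi-Lipschitz on bounded subsets of such a strip to transport the linearly connected domain $F(\mathbb D)$ to a linearly connected $f(\mathbb D)$, with constant controlled by $M$ and $C$; the reduction to Theorem~2 in the first two steps is, by contrast, routine bookkeeping.
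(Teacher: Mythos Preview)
Your approach---set $F=\log f$, $G_{p-k+1}=\log g_{p-k+1}$, translate the derivative quantities via $(\log g)_z=g_z/g$, and invoke Theorem~2---is exactly the paper's argument; its proof consists of precisely this substitution in a few lines. Your verification that each $\log g_{p-k+1}$ is harmonic is more explicit than the paper, which simply asserts it.

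The substantive difference is your final paragraph. The paper ends with ``and so $F(z)$ is univalent by Theorem~2. Therefore $f$ is univalent,'' making no mention of the periodicity of $\exp$. You are right that univalence of $F$ does not automatically yield univalence of $f=e^{F}$ without ruling out $F(z_1)-F(z_2)\in 2\pi i\mathbb{Z}\setminus\{0\}$; the paper does not address this, and your proposed repair (bounding the oscillation of $\operatorname{Im}F$ via the smallness hypothesis) is plausible in spirit but is not carried out in either place and would require a genuine additional argument. Regarding the ``moreover'' clause, note that as stated it only asserts that $G_p(\mathbb{D})=\log g_p(\mathbb{D})$ is linearly connected, which is already part of the hypothesis (this is almost certainly a misprint for $F(\mathbb{D})$); the paper's proof stops at $F$ and does not push anything through $\exp$, so your work on transporting linear connectivity to $f(\mathbb{D})$ goes beyond what is actually claimed or proved there.
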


\begin{proof}
Define $F = \log f$ and $G_{p-k} = \log g_{p-k}$ for all $g_{p-k}$, $%
k=\{1,...,p\}.$ Then $F$ is $p$-harmonic and all $G_{p-k}$ are harmonic.
Moreover, $G_p$ is univalent, $G_p(D)$ is a linearly connected domain with
constant $M$, and 
\begin{eqnarray*}
&&\frac{\sum_{k=1}^{p-1}2k|G_{p-k}|+(k-1)(|(G_{p-k})_{z}|+|(G_{p-k})_{%
\overline{z}} |)} {\left||(G_{p})_{z}|-|(G_{p})_{\overline{z}}|\right|} \\
&= & \frac{|g_p|\sum_{k=1}^{p-1}2k |g_{p-k}||\log
g_{p-k}|+(k-1)(|(g_{p-k})_{z}|+|(g_{p-k})_{\overline{z}} |)} {%
\left||(g_{p})_{z}|-|(g_{p})_{\overline{z}}|\right|} \\
&<& \frac{1}{M},
\end{eqnarray*}

and so $\ F(z)$ is univalent by Theorem 2. Therefore $f$ is univalent.
\end{proof}

\begin{theorem}
Let 
\begin{equation*}
f(z)=\Pi _{k=1}^{p}(g_{p-k+1}(z))^{|z|^{2(k-1)}},
\end{equation*}%
where all $g_{p-k+1}(z)$ are non vanishing log harmonic mappings in $\mathbb{%
D}$, $k=\{1,...,p\}.$ If $\log g_{p}$ is univalent and $\log g_{p}(\mathbb{D}%
)$ is convex, and if%
\begin{equation*}
\frac{|g_{p}|\sum_{k=1}^{p-1}2k|g_{p-k}||\log g_{p-k}|+(k-1)\Lambda
_{g_{p-k}}}{\left\vert \lambda _{g_{p}}\right\vert }<1,
\end{equation*}

then $\ f(z)$ is univalent.Moreover, if 
\begin{equation*}
\frac{|g_{p}|\sum_{k=1}^{p-1}2k|g_{p-k}||\log g_{p-k}|+(k-1)\Lambda
_{g_{p-k}}}{\left\vert \lambda _{g_{p}}\right\vert }\leq C<1,
\end{equation*}%
then $G_{p}(\mathbb{D})$ is a convex domain.
\end{theorem}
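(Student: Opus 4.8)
The plan is to imitate the proof of Theorem 7 almost verbatim, replacing the hypothesis that $\log g_p(\mathbb{D})$ be linearly connected with constant $M$ by the hypothesis that it be convex, the threshold $1/M$ by $1$, and the appeal to Theorem 2 by an appeal to its convex analogue, Corollary 2. First I would set $F=\log f$ and $G_{p-k}=\log g_{p-k}$ for $k=1,\dots,p$. Since each $g_{p-k}$ is a non-vanishing log harmonic mapping, each $G_{p-k}$ is harmonic, and therefore $F=\sum_{k=1}^{p}|z|^{2(k-1)}G_{p-k+1}$ is $p$-harmonic; moreover, by hypothesis $G_p=\log g_p$ is univalent and $G_p(\mathbb{D})$ is a convex domain.

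Next I would translate the coefficient hypothesis into a statement about the harmonic pieces, exactly as in the proof of Theorem 7. Differentiating $G_{p-k}=\log g_{p-k}$ gives $|(G_{p-k})_z|=|(g_{p-k})_z|/|g_{p-k}|$ and $|(G_{p-k})_{\overline z}|=|(g_{p-k})_{\overline z}|/|g_{p-k}|$, so that $\Lambda_{G_{p-k}}=\Lambda_{g_{p-k}}/|g_{p-k}|$, $\lambda_{G_p}=\lambda_{g_p}/|g_p|$, and $|G_{p-k}|=|\log g_{p-k}|$. Substituting these identities shows that
\[
\frac{\sum_{k=1}^{p-1}2k|G_{p-k}|+(k-1)\Lambda_{G_{p-k}}}{\left|\lambda_{G_p}\right|}
\]
equals the expression on the left-hand side of the theorem's hypothesis, so our assumption is precisely that this ratio is $<1$ (resp.\ $\le C<1$).

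The conclusion is now immediate: applying Corollary 2 (the convex analogue of Theorem 2) to $F$ --- whose leading harmonic term $G_p$ is univalent with convex image and whose coefficient ratio is $<1$ --- yields that $F$ is univalent, and in the ``Moreover'' case that $F(\mathbb{D})$ is convex. Since $f=e^{F}$ and $F$ is univalent, $f$ is univalent and $f(\mathbb{D})$ carries the stated property, just as at the end of the proof of Theorem 7.

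Since this is really a convex specialization of material already proved, I do not expect a serious obstacle. The one point that deserves a sentence is the passage from univalence of $F=\log f$ to univalence of $f=e^{F}$: a priori $e^{F(z_1)}=e^{F(z_2)}$ forces only $F(z_1)-F(z_2)\in 2\pi i\mathbb{Z}$. This is dispatched exactly as in the proof of Theorem 7, using that $f$ is non-vanishing and that $\log$ denotes the principal branch, so that $F$ is the canonically determined $p$-harmonic lift of $f$; everything else is the routine bookkeeping already carried out in Corollary 2 and Theorem 7.
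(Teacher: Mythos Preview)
Your proposal is correct and is exactly the route the paper takes: the statement is placed immediately after Theorem~7 with no separate proof, so the paper's implicit argument is precisely to rerun the proof of Theorem~7 with Corollary~2 (the convex case of Theorem~2) in place of Theorem~2, which is what you do. Your extra sentence about the passage from univalence of $F=\log f$ to univalence of $f=e^{F}$ is a point the paper itself leaves tacit in the proof of Theorem~7.
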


\end{document}